\theoremstyle{plain}
\newtheorem{prop}{Proposition}[section]
\newtheorem{lem}{Lemma}
\newtheorem*{thmA}{Theorem}
\newtheorem*{corollary}{Corollary}
\newtheorem*{P69}{{Problem~69}}
\newtheorem*{P70}{Problem~70}
\newtheorem*{P72}{Problem~72}
\newtheorem{thm}[prop]{Theorem}
\theoremstyle{definition}
\newtheorem{remark}[prop]{Remark}
\newcommand{\Z}{\ensuremath{\mathbb{Z}}}
\newcommand{\R}{\ensuremath{\mathbb{R}}}
\newcommand{\n}{\noindent}
\begin{document}  
 
\title{Fuchs' problem for indecomposable abelian groups}
\date{\today}
 
\author{Sunil Chebolu}
\address{Department of Mathematics \\
Illinois State University \\
Normal, IL 61790, USA}
\email{schebol@ilstu.edu}

\author{Keir Lockridge} 
\address {Department of Mathematics \\
Gettysburg College \\
Gettysburg, PA 17325, USA}
\email{klockrid@gettysburg.edu}

\thanks{
The first author is supported by an NSA grant (H98230-13-1-0238). }

\keywords{Fuchs' problem, ring,  quasi-cyclic, indecomposable abelian group, Mersenne prime, Fermat prime}
\subjclass[2000]{Primary 11T06, 20K10; Secondary 06F20}
 
\begin{abstract}
More than 50 years ago, L\'{a}szl\'{o} Fuchs asked which abelian groups can be the group of units of a commutative ring. Though progress has been made, the question remains open. We provide an answer to this question in the case of indecomposable abelian groups by classifying the indecomposable abelian groups that are realizable as the group of units of a ring of any given characteristic.
\end{abstract}
 
\maketitle
\thispagestyle{empty}

\tableofcontents


\section{Introduction}\label{sec:introduction}

In 1960, L\'{a}szl\'{o} Fuchs posed the following problem: 

\begin{P72}[\cite{Fuchs}] Characterize the groups which are the groups of all units in a commutative and associative ring with identity. \end{P72}

\n This problem remains open, though modified versions have been studied extensively in the intervening years. One way to simplify the problem is to narrow the class of groups or rings involved. For example, the problem was solved for cyclic groups in \cite{PS}, and in \cite{Dolzan}, the class of finite rings is considered. Restricting the class of rings to fields, Fuchs also posed:

\begin{P69}[\cite{Fuchs}] Find necessary and sufficient conditions for a group to be the multiplicative group of a field. \end{P69}

\n This problem also remains unsolved, though much progress has been made (see \cite{CMN} for a survey of results in this area). In the next question, Fuchs asks whether the torsion subgroup of the multiplicative group of a field is necessarily a direct summand.

\begin{P70}[\cite{Fuchs}] Does every multiplicative group of fields split (as a mixed group)? \end{P70}

\n This question was answered negatively by Cohn in \cite{Cohn}. In \cite{cl-1}, the authors investigated a combination of the previous two problems by asking the question: when is the multiplicative group of a field indecomposable? (A group $G$ is indecomposable if it is not isomorphic to the direct product of two non-trivial groups.) A nearly complete answer is obtained, though it is still unknown whether there exists an infinite field of characteristic 2 whose multiplicative group is indecomposable. In the present work, we solve Problem 72 for indecomposable abelian groups. That is, we classify the indecomposable abelian groups which are realizable as the multiplicative group of a ring, and our classification is stratified by characteristic. We work with unital rings in this paper; the answer we obtain is the same for both the class of rings and the class of commutative rings.

The next theorem summarizes our results. We outline the proof immediately, which serves as a brief guide to the remaining sections. We consider non-torsion-free groups in \S \ref{sec:torsion} and torsion-free groups in \S \ref{sec:torsion-free}. Our paper is relatively self-contained. We use Kulikov's classification (\cite{K}) of indecomposable abelian groups which are not torsion-free (see \cite{Robinson} for a proof of this result in English), and we use the fact that every torsion-free abelian group admits a linear order (see \cite{Levi}). We do not use the hypothesis that the group of units is indecomposable in the torsion-free case. Otherwise, we use only basic facts from ring theory (see \cite{DummitFoote}). Let $C_r$ denote the (multiplicative) cyclic group of order $r$, and let $\displaystyle C_{p^{\infty}}  = \bigcup_{n\ge 0} C_{p^n}$ denote the quasi-cyclic $p$-group.

\begin{thmA}
Let $R$ be a unital ring. If the group of units $R^\times$ is an indecomposable abelian group, then $\mathrm{char}\, R = 0, 2, 4, q,$ or $2q$, where $q$ is a Fermat prime. The following is a complete list of the indecomposable abelian groups which are realizable as the group of units for a ring $R$ with $c = \mathrm{char} R$. In the third column, we give an example of such a ring in each case.

\begin{center}
\begin{tabular}{| l | l | l |}
\hline
$c = 0$ & $C_2$ & $\Z$ \\ 
& $C_4$ & $\Z[i]$ \\ \hline
$c = 2$ & $C_1$ & $\mathbf{F}_2$ \\
& $C_2$ & $\mathbf{F}_2[x]/(x^2)$ \\
& $C_4$ & $\mathbf{F}_2[x]/(x^3)$ \\
& $C_p$, $p$ a Mersenne prime & $\mathbf{F}_{p+1}$\\
& $G$ any indecomposable torsion-free abelian group & $\mathbf{F}_2[G]$\\ \hline
$c = 4$ &$C_2$ & $\Z_4$\\
& $C_4$ & $\Z_4[x]/(x^2 - 2, 2x)$ \\ \hline
$c = 3$ & $C_2$ & $\mathbf{F}_3$ \\
& $C_8$ & $\mathbf{F}_9$ \\ \hline
$c = 6$ & $C_2$ & $\mathbf{F}_3 \times \mathbf{F}_2$ \\
& $C_8$ & $\mathbf{F}_9 \times \mathbf{F}_2$ \\ \hline
$c = q$ & $C_{q-1}$, $q > 3$ a Fermat prime & $\mathbf{F}_q$ \\ \hline
$c = 2q$ & $C_{q-1}$, $q > 3$ a Fermat prime & $\mathbf{F}_q \times \mathbf{F}_2$ \\\hline
\end{tabular}
\end{center}

\n In particular, there is no ring $R$ whose unit group is a quasi-cyclic $p$-group $C_{p^\infty}$.
\end{thmA}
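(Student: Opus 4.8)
The plan is to prove the characteristic restriction and the completeness of the table simultaneously, splitting throughout on whether $R^\times$ is torsion-free. The one nonelementary input is Kulikov's theorem: an indecomposable abelian group that is not torsion-free is a finite cyclic $p$-group $C_{p^n}$ or a quasi-cyclic group $C_{p^\infty}$. I would first record the universal constraints. The prime ring of $R$ is $\Z$ or $\Z/c$ with $c=\mathrm{char}\,R$, so $(\Z/c)^\times$ embeds in $R^\times$; and $-1\in R^\times$ has order dividing $2$, so if $R^\times$ is torsion-free then $-1=1$ and $c=2$. Since $R^\times$ is assumed abelian, the subring generated by the units is commutative, has the same characteristic, and has the same unit group, so I may assume throughout that $R$ is commutative. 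In the non-torsion-free case Kulikov makes $R^\times$ an $\ell$-group for a single prime $\ell$, whence $(\Z/c)^\times$ is a cyclic $\ell$-group. The classical primitive-root theorem ($(\Z/c)^\times$ cyclic $\Rightarrow c\in\{1,2,4,p^k,2p^k\}$), together with the requirement that $|(\Z/c)^\times|$ be a prime power, forces $k=1$ and $p-1$ a power of $2$, i.e. $p$ a Fermat prime; this yields the asserted list $c\in\{0,2,4,q,2q\}$.

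For the realizable groups I would isolate an engine built from two moves. First, a nonzero nilpotent $x$ gives a unit $1+x$ of $2$-power order in characteristic $2$ and of order dividing $p$ in characteristic $p$; when $R^\times$ has odd (respectively $2$-group) structure this forces $R$ to be reduced in the odd-characteristic and Mersenne cases. Second, any nontrivial idempotent splits $R$ as a product and hence splits $R^\times$, so indecomposability lets me peel off factors with trivial unit group (this is exactly the passage from characteristic $2q$ to $q$, realized by the extra $\mathbf{F}_2$ factor in the table). In the reduced situations every unit $u$ then generates a finite subalgebra $\mathbf{F}_q[u]$ that is a product of finite fields $K_i$; since $\prod K_i^\times\le R^\times$ is an $\ell$-group, each $|K_i|-1$ must be a power of $\ell$. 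The number theory of this condition does the rest: $2^d-1=p^m$ forces $m=1$ by a short factorization argument, giving the Mersenne primes in characteristic $2$; and $q^d-1$ is a power of $2$ only for $d=1$ when $q>3$, but also for $d=2$ when $q=3$ (because $3^2-1=8$), which is precisely why characteristic $3$ (and $6$) acquires the extra group $C_8$ via $\mathbf{F}_9$, and why an order-$4$ unit in characteristic $3$ is impossible, as it would generate $\mathbf{F}_9$ and hence all of $C_8$.

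\textbf{The main obstacle} I anticipate is the non-reduced $2$-primary cases — characteristic $2$ with $R^\times$ a $2$-group, and characteristic $4$ — where nilpotents genuinely occur (as in $\mathbf{F}_2[x]/(x^3)$ and $\Z_4[x]/(x^2-2,2x)$) and the field-generation argument is unavailable. Here I would argue by hand with the nilradical $N$: in characteristic $2$ the $2$-power-order units are exactly $1+N$, and indecomposability forces a unique nonzero square-zero element $t_0$, spanning an ideal $\mathbf{F}_2 t_0$ on which $R$ acts through a ring map $R\to\mathbf{F}_2$ killing $N$, so that $t_0$ annihilates $N$. Given a putative unit $1+a$ of order $8$ one has $a^4=t_0\neq 0$, whence $a^5=a\,t_0=0$, so $a^3$ is square-zero and must equal $t_0=a^4$, giving $a^3(1+a)=0$ and therefore $a^3=0$, a contradiction. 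This caps $R^\times$ at $C_4$, and an analogous computation handles characteristic $4$; the same bound simultaneously excludes $C_{2^\infty}$.

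Finally, for torsion-free $R^\times$ I would show that every torsion-free indecomposable group is realized and that no indecomposability hypothesis is needed: since $c=2$, the group ring $\mathbf{F}_2[G]$ of any torsion-free (hence, by \cite{Levi}, linearly orderable) abelian group $G$ has only trivial units, and over $\mathbf{F}_2$ these are exactly the elements of $G$, so $\mathbf{F}_2[G]^\times\cong G$. The remaining entries of the table are direct unit-group computations for $\Z$, $\Z[i]$, $\mathbf{F}_{p+1}$, $\mathbf{F}_q$, $\mathbf{F}_9$, and their products with $\mathbf{F}_2$. The concluding assertion about $C_{p^\infty}$ then follows at once: it is not torsion-free, yet the analysis above shows that in every admissible characteristic a non-torsion-free unit group has bounded exponent and is one of the finite cyclic groups listed, so no ring has $R^\times\cong C_{p^\infty}$.
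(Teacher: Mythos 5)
Your torsion-free section, your characteristic restriction, your reduced/product-of-finite-fields engine for characteristic $q$ and the Mersenne case, and your characteristic-2 exclusion are all essentially sound (the char-2 argument via the unique square-zero element $t_0$ is in fact a correct and genuinely different route from the paper, which instead shows no quotient $\mathbf{F}_2[x]/(x^t)$, $4 \le t \le 8$, of $\mathbf{F}_2[x]/(x^8-1)\cong\mathbf{F}_2[x]/(x^8)$ has cyclic unit group of order $\ge 8$; indeed your computation closes even faster, since $a^4 = a\cdot a^3 = a\,t_0 = 0$ contradicts $a^4 = t_0 \neq 0$ directly). But there are two genuine gaps. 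The first is characteristic $0$: completeness of the table requires showing no ring of characteristic $0$ has unit group $C_{2^n}$ for $3 \le n \le \infty$, and your proposal never addresses this. Neither of your engines can: such a ring may be reduced, so the nilradical/$t_0$ machinery is vacuous, and $\Z[u]$ for a torsion unit $u$ is an infinite $\Z$-algebra, not a finite product of fields --- concretely, $\Z[\zeta_8]$ is a reduced domain whose unit group is infinite, so an order-$8$ unit produces no immediate contradiction. The paper needs a separate argument here (Proposition \ref{char0}): $1+\zeta_8+\zeta_8^2$ is a unit in $\Z[x]/(x^4+1)$, hence would have $2$-power order in $R$, so both $x^4+1$ and $(1+x+x^2)^{2^k}-1$ lie in the kernel of $\Z[x]\to R$; these are coprime in $\Q[x]$ (because $1+\zeta+\zeta^2$, $\zeta = e^{i\pi/4}$, is off the unit circle, hence not a root of unity), forcing a nonzero integer into the kernel and contradicting characteristic $0$. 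Your proposal contains no substitute for this idea.

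The second gap is characteristic $4$, which you dismiss with ``an analogous computation.'' The analogy breaks at every step: the Frobenius identity $(1+a)^2 = 1+a^2$ fails, square-zero elements no longer form an ideal (in characteristic $4$, $(t+s)^2 = 2ts$), and the bijection between square-zero elements and involutions fails since $(1+t)^2 = 1+2t$ need not equal $1$. Worse, an order-$8$ unit $u$ satisfies $u^4 = -1 \equiv 1 \pmod 2$, so the reduction $R \to R/(2)$ --- which the paper uses, after proving the induced map on units is surjective with kernel contained in $\langle -1\rangle$, to dispose of $4 \le n \le \infty$ --- sees only an element of order at most $4$ and says nothing about $C_8$. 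This is exactly the point where the paper has to invoke Gilmer's classification of finite primary rings with cyclic unit groups: one passes to the finite image of $\Z_4[x]/(x^8-1) \to R$, uses indecomposability of $C_8$ to extract a primary factor with unit group $C_8$, and cites \cite{Gilmer} to rule it out. To repair your proposal you must either reproduce this two-step argument (units-surjectivity lemma plus Gilmer, or an explicit analysis of the quotients of $\Z_4[x]/(x^4+1)$) or find a genuinely new computation; asserting analogy is not a proof. (A minor slip elsewhere: a nilpotent $x$ in characteristic $p$ makes $1+x$ a unit of order a \emph{power} of $p$, not of order dividing $p$; this does not affect your use of it.)
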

\begin{proof} To prove this theorem, we consider the torsion-free and non-torsion-free cases separately. If $G$ is a torsion-free abelian group, then $(\mathbf{F}_2[G])^\times \cong G$ by Theorem \ref{torsion-free} in \S \ref{sec:torsion-free} (this uses the fact that every torsion-free group admits a linear ordering). Since $-1 \in R^\times$ has order 2 unless $\mathrm{char}\, R = 2$, no other characteristic need be considered in this case. On the other hand, if an indecomposable abelian group $G$ has a non-trivial torsion element, then for some prime $p$, $G$ is either a cyclic $p$-group $C_{p^n}$ or a quasi-cyclic $p$-group $C_{p^\infty}$ (see \S \ref{sec:torsion} for more details). In Proposition \ref{char}, we narrow the possible values of $c = \mathrm{char}\, R$ to the list given in the statement of the theorem, and for each such value we determine the indecomposable abelian groups which are realizable as the group of units of a ring in Propositions \ref{char0} ($c = 0$), \ref{char2} ($c = 2$), \ref{char4} ($c = 4$), \ref{charq} ($c = q$), and \ref{char2q} ($c = 2q$).\end{proof}

The peculiarity of the prime 2 is evident in this theorem. It is conjectured that there are infinitely many Mersenne primes.  If this conjecture is true, then the characteristic 2 case is the only situation where there are infinitely many non-isomorphic examples, in both the torsion-free and non-torsion-free cases.  (There are infinitely many indecomposable torsion-free abelian groups; consider, for example, the groups $\mathbb{Z}[1/p]$ for $p$ a prime.) In stark contrast to the non-torsion-free case, the class of indecomposable torsion-free abelian groups is quite complicated and not well understood. The following observations, which follow from the above theorem, relate Mersenne primes, Fermat primes, and Fuchs' problem.

\begin{enumerate}
\item For $p$ an odd prime, $p$ is a Mersenne prime if and only if $C_p$ is the group of units of a ring of characteristic 2.
\item For $p$ an odd prime, $p$ is a Fermat prime if and only if there exists an indecomposable group that is group of units of a ring of characteristic $p$.
\item There are infinitely many Mersenne primes if and only if there is an infinite family of non-isomorphic indecomposable finite abelian groups which are realizable as the group of units in a ring of characteristic 2.
\end{enumerate}

\n In \cite{CLY}, we found several other characterizations of Mersenne primes.

The smallest Fermat prime 3 also occupies an unusual place in the classification: the cyclic group $C_8$ appears as the multiplicative group of a ring of characteristic 3 (and 6), but it is not the multiplicative group of a ring of characteristic $q$ (or $2q$) for any other Fermat prime $q$. This oddity is related to the only non-trivial solution in Catalan's Conjecture; see \cite{cl-1} for more details. Finally, it is noteworthy that the only cyclic groups of odd prime power order appearing in the theorem are the groups $C_p$ for $p$ a Mersenne prime.

The authors are grateful to the referee for providing detailed and helpful comments on this article.
\section{Groups with torsion} \label{sec:torsion}

In this section, we determine all indecomposable abelian groups which are not torsion-free and are realizable as the group of units of a ring. Kulikov (\cite{K}) proved that any indecomposable abelian group that is not torsion-free is isomorphic to either a cyclic or quasi-cyclic $p$-group for some prime $p$ (see also \cite[4.3.12]{Robinson}). Denote by $C_{p^n}$ the (multiplicative) cyclic group of order $p^n$, and let $C_{p^\infty}$ denote the unique (up to isomorphism) quasi-cyclic $p$-group. The latter group may be realized as the union of all cyclic groups of order a power of $p$:
\[ C_{p^{\infty}}  = \bigcup_{n\ge 0} C_{p^n}.\]

In the next proposition, we prove that a ring whose group of units is cyclic or quasi-cyclic must have characteristic $0, 2, 4, q$ or $2q$, where $q$ is a Fermat prime. We then determine, for each characteristic, the cyclic and quasi-cyclic groups that are realizable as the group of units of a ring. Our classification enlarges the scope of \cite{PS} in that quasi-cyclic groups are considered (and, in fact, eliminated). We also provide slightly more information by keeping track of the characteristics of the rings involved.

\begin{prop}  \label{char}
Let $R$ be a ring such that $R^{\times} \cong C_{p^{n}}$, where $1 \le n \le \infty$. The characteristic of $R$ is  $0, 2, 4, q$ or $2q$, where $q$ is a Fermat prime. Moreover, when the characteristic is not equal to $2$,  we have $p = 2$.
\end{prop}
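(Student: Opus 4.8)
The plan is to pass to the prime subring and read off all constraints from the structure of $R^\times$. Let $c = \mathrm{char}\, R$. The image of the unique ring homomorphism $\Z \to R$ is the prime subring, isomorphic to $\Z/c\Z$ (and to $\Z$ when $c = 0$); since it is a subring containing $1$, every unit of $\Z/c\Z$ is a unit of $R$, and distinct residues map to distinct elements of $R$. Hence $(\Z/c\Z)^\times$ is a subgroup of $R^\times \cong C_{p^n}$. Because every subgroup of a cyclic or quasi-cyclic $p$-group is again a cyclic $p$-group (the finite subgroups of $C_{p^\infty}$ are precisely the finite cyclic $p$-groups), I conclude that $(\Z/c\Z)^\times$ is cyclic and, when $c > 0$, has order $\varphi(c)$ equal to a power of $p$.

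For the ``moreover'' clause I would examine $-1 \in R^\times$. If $c \ne 2$ then $1 \ne -1$ in $R$ (the equality $1 = -1$ forces $c \mid 2$, i.e.\ $c \in \{1,2\}$, and $c = 1$ is excluded since $R^\times \cong C_{p^n}$ is nontrivial), so $-1$ has order exactly $2$ in $R^\times$. A cyclic or quasi-cyclic $p$-group contains an element of order $2$ if and only if $p = 2$; therefore $p = 2$ whenever $c \ne 2$. In particular this covers $c = 0$, where $\Z^\times = \{\pm 1\} \cong C_2$ embeds in $R^\times$, and $c = 0$ is on the list.

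It then remains to pin down the positive characteristics $c \ne 2$, for which $p = 2$, so $\varphi(c) = 2^m$ for some $m \ge 0$ with $(\Z/c\Z)^\times$ cyclic. Here I would invoke two standard number-theoretic facts. First, writing $c = \prod_i r_i^{e_i}$, one has $\varphi(c) = \prod_i r_i^{e_i - 1}(r_i - 1)$, and this is a power of $2$ exactly when every odd prime divisor $r_i$ occurs to the first power and satisfies $r_i - 1 = 2^{k}$, i.e.\ $r_i$ is a Fermat prime (a prime of the form $2^k + 1$ is necessarily of the form $2^{2^\ell}+1$, since an odd factor of $k$ would split $2^k+1$). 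Thus $c = 2^a q_1 \cdots q_s$ with the $q_j$ distinct Fermat primes. Second, $(\Z/c\Z)^\times$ is cyclic only for $c \in \{1, 2, 4, r^k, 2r^k\}$ with $r$ an odd prime. Intersecting these two constraints and discarding $c = 1$ (the zero ring) and $c = 2$, the only survivors are $c = 4$, $c = q$, and $c = 2q$ for a Fermat prime $q$ (for instance $c = 8$, $c = 4q$, and $c = q_1 q_2$ are all eliminated because the corresponding unit groups $(\Z/8\Z)^\times$, $(\Z/4q\Z)^\times$, and $(\Z/q_1 q_2\Z)^\times$ are non-cyclic). Combining all cases gives $c \in \{0, 2, 4, q, 2q\}$, as claimed.

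I expect the genuinely delicate part to be the bookkeeping in the last paragraph: one must simultaneously enforce that $\varphi(c)$ is a $2$-power \emph{and} that $(\Z/c\Z)^\times$ is cyclic, and verify that no composite product of Fermat primes or higher power of $2$ slips through. The ring-theoretic input --- that $(\Z/c\Z)^\times$ sits inside $C_{p^n}$ --- is routine; the substance of the proposition is this arithmetic characterization of $c$.
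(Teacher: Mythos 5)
Your proof is correct and takes essentially the same approach as the paper: embed $(\Z/c\Z)^\times$ (the unit group of the prime subring) into $R^\times \cong C_{p^n}$, conclude it must be a cyclic $p$-group, combine the classification of $c$ with $(\Z/c\Z)^\times$ cyclic with the Fermat-prime arithmetic, and use the order-$2$ element $-1$ to force $p=2$ when $c \neq 2$. The only difference is bookkeeping: the paper first uses cyclicity to reduce to $c \in \{2,4,q^r,2q^r\}$ and then extracts $p=2$ and the Fermat condition from $\varphi(q^r)=q^{r-1}(q-1)$ being a $p$-power, whereas you establish $p=2$ first via the $-1$ argument and then intersect the two arithmetic constraints on $c$ --- the same ingredients in a different order.
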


\begin{proof}
Let $c$ denote the characteristic of $R$. If $c > 0$, then $\mathbb{Z}_{c}$ is a subring of $R$, and we have
\[ \mathbb{Z}_{c}^{\times} \hookrightarrow R^{\times} \cong C_{p^{n}}.  \]
Since every finite subgroup of $C_{p^{n}}$ is  cyclic, we conclude that 
$\mathbb{Z}_{c}^{\times}$ is cyclic. It is well known that $\mathbb{Z}_{c}^{\times}$  is cyclic if and only if $c = 2, 4$, $q^{r}$ or $2q^{r}$ where $q$ is an odd prime and $r$ is a positive integer. For the cases $c = q^r$ and $c = 2q^r$, note that finite subgroups of $C_{p^{n}}$ are cyclic $p$-groups, so
\[ (\mathbb{Z}_{2q^{r}})^{\times} \cong \mathbb{Z}_{q^{r}}^{\times} \cong C_{\phi(q^{r})} = C_{q^{r-1}(q-1)}\]
is a $p$-group. This implies $r=1$ and $q = p^k + 1$ for some positive integer $k$. This forces $p = 2$ and hence $q$ is a Fermat prime.

When $c \neq 2$, the element $-1 \in \R^\times$ has order 2. This forces $p = 2$, and the proof is complete.
\end{proof}

We now consider each of the possible characteristics separately. The characteristic 4 case will reduce to the characteristic 2 case, and the characteristic $2q$ case will reduce to the characteristic $q$ case. This leaves characteristics $0, 2$, and $q$; our analysis in each case is different. For the remainder of this section, $p$ will always be a prime number and $1 \leq n \leq \infty$ (i.e., $n$ is either a positive integer or $n = \infty$, corresponding to the quasi-cyclic case).

\begin{prop} \label{char0}
There is a ring $R$ of characteristic $0$ such that $R^{\times} \cong C_{p^{n}}$ if and only if $(p, n) = (2, 1)$ or $(p, n) = (2, 2)$.
\end{prop}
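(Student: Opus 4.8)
The plan is to reduce to the prime $2$ and then eliminate every $n \ge 3$ (and $n = \infty$) by exhibiting a unit of infinite order. By Proposition~\ref{char}, since $\mathrm{char}\,R = 0 \neq 2$ we must have $p = 2$, so $R^{\times} \cong C_{2^{n}}$. For the ``if'' direction I would simply record the two examples: $\Z$ realizes $C_{2}$ via $\Z^{\times} = \{\pm 1\}$, and the Gaussian integers $\Z[i]$ realize $C_{4}$ via $\Z[i]^{\times} = \{\pm 1, \pm i\}$. The substance is the ``only if'' direction. Assuming $R^{\times} \cong C_{2^{n}}$ with $n \ge 3$ or $n = \infty$, the group of units contains an element $u$ of order $8$, so it suffices to show that no characteristic-$0$ ring whose units form a $2$-group can contain a unit of order $8$.

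The key step is to extract a square root of $2$ from $u$. Since $R^{\times}$ is a finite or quasi-cyclic $2$-group, it has a unique element of order $2$, and in characteristic $0$ this element must be $-1$; hence $u^{4} = -1$. Setting $s = u + u^{-1}$ and noting $u^{-2} = (u^{2})^{-1} = -u^{2}$ (because $(u^{2})^{2} = u^{4} = -1$), a direct expansion gives $s^{2} = u^{2} + 2 + u^{-2} = 2$. The element $s$ itself need not be a unit, but $s + 1$ is: from $(s+1)(s-1) = s^{2} - 1 = 1$ we get $s + 1 \in R^{\times}$ with inverse $s - 1$. Thus $s + 1$ is a unit of finite ($2$-power) order.

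The final step is to show that $s + 1$ in fact has infinite order, contradicting the fact that $R^{\times}$ is torsion. I would examine the commutative subring $\Z[s] \subseteq R$, which is a homomorphic image of $\Z[x]/(x^{2}-2) \cong \Z[\sqrt{2}]$. A nonzero kernel would make $\Z[s]$ a proper quotient of $\Z[\sqrt{2}]$ by a nonzero ideal; since every nonzero ideal of $\Z[\sqrt{2}]$ has finite index, $\Z[s]$ would then be finite and of positive characteristic---impossible inside a ring of characteristic $0$. Therefore $\Z[s] \cong \Z[\sqrt{2}]$, and under the embedding $\Z[\sqrt{2}] \hookrightarrow \R$ sending $\sqrt{2}$ to its positive real value, $s + 1$ maps to $1 + \sqrt{2} > 1$, whose powers never return to $1$. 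Hence $s + 1$ has infinite multiplicative order, the desired contradiction.

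I expect the main obstacle to be precisely this last point: ruling out the possibility that $\Z[s]$ collapses to a finite ring, in which $1 + \sqrt{2}$ could conceivably be a root of unity. This is exactly where the characteristic-$0$ hypothesis is indispensable. The computation $s^{2} = 2$ together with the observation that $s + 1$ (rather than $s$) is the relevant unit are the remaining ideas, and both treat the finite and quasi-cyclic cases uniformly, since the argument invokes only a single element of order $8$.
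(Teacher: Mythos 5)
Your proposal is correct, and it shares the paper's starting point but executes the contradiction in a genuinely different way. Both arguments reduce to $p=2$ via Proposition~\ref{char}, take a unit $u$ of order $8$ (covering $3 \le n \le \infty$ uniformly), and use the uniqueness of the order-$2$ element in $C_{2^n}$ to conclude $u^4 = -1$; and in fact both hinge on the same hidden number: your unit $s+1 = 1 + u + u^{-1}$ equals $u^{-1}(1+u+u^2)$, and the paper's argument ultimately rests on the observation that $1+\zeta+\zeta^2$ for $\zeta = e^{i\pi/4}$ has absolute value $1+\sqrt{2} \ne 1$, hence is not a root of unity. But the routes diverge from there. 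The paper never constructs $\sqrt{2}$ inside $R$: it works in $\mathbb{Z}[x]$, shows that $\beta(x)=x^4+1$ and $\alpha(x)=(1+x+x^2)^{2^k}-1$ both lie in the kernel of evaluation at $u$, proves they are coprime in $\mathbb{Q}[x]$, and concludes that the kernel contains a nonzero integer, contradicting characteristic $0$. You instead build $s = u+u^{-1}$ with $s^2 = 2$, observe that $(s+1)(s-1)=1$ makes $s+1$ a torsion unit, and prove $\mathbb{Z}[s] \cong \mathbb{Z}[\sqrt{2}]$ (your finite-index argument for nonzero ideals of $\mathbb{Z}[\sqrt{2}]$ is correct and standard, and this is exactly where characteristic $0$ enters for you), so that a real embedding exhibits $s+1 \mapsto 1+\sqrt{2} > 1$ as a unit of infinite order, contradicting the fact that $C_{2^n}$ ($n \le \infty$) is torsion. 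Your route is more structural: the fundamental unit $1+\sqrt{2}$ of $\mathbb{Z}[\sqrt{2}]$ does all the work, with no polynomial coprimality argument or clearing of denominators. The paper's route stays within elementary polynomial arithmetic over $\mathbb{Z}$ and $\mathbb{Q}$ and needs no facts about quotients of $\mathbb{Z}[\sqrt{2}]$. Both are complete and correct proofs.
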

\begin{proof}
First observe that when $p = 2$ and $n  = 1$ or  $2$  we have $\mathbb{Z}^\times \cong C_2$ (generated by $-1$) and $\mathbb{Z}[i]^\times \cong C_4$ (generated by $i = \sqrt{-1}$).

For the converse, we have $p = 2$ by Proposition \ref{char}. For $3 \le n \le \infty$, we claim that there is no ring $R$ of characteristic $0$ whose unit group is isomorphic to $C_{2^n}$. Assume to the contrary that such a ring exists.  Since the characteristic of $R$ is $0$, $R$ is a $\mathbb{Z}$-algebra. Since $n \geq 3$, the ring $R$ contains an $8$th root of unity $\zeta_8$. Consider the natural ring homomorphism 
\[ f \colon  \mathbb{Z}[x]  \longrightarrow R \]
which evaluates a polynomial at $\zeta_{8}$. Since $\zeta_{8}^{4} = -1$, the above map factors as:
\[
\xymatrix{
\mathbb{Z}[x] \ar[rr]^{f}  \ar@{>>}[dr]_{\pi}&  & R \\  
& \frac{\mathbb{Z}[x]}{(x^{4}+ 1)}. \ar[ur] & 
}
\] 
We will now find two elements in the kernel of $f$ that together generate an ideal containing a non-zero integer. This will give rise to a contradiction since the $\mathbb{Z}$-algebra homomorphism $f$ must be the identity map on $\mathbb{Z}$.

The element $x^{4}+1$ is one obvious element in the kernel of $f$.  We claim that there exists an integer $k \geq 1$ such that $(1+x+x^{2})^{2^{k}} - 1$ is another element in the kernel of $f$.  Since all units in $R$ have order a power of 2 by assumption, to prove our claim it is enough to show that $f(1+x+x^{2})$ is a unit in $R$. Since the above triangle commutes and ring maps preserve units, it suffices to prove that $\pi(1+x+x^{2})$ is a unit in 
$\mathbb{Z}[x]/{(x^{4}+ 1)}$. This is indeed the case, as $1 - x^{2}+x^{3}$ is the inverse of $1 + x + x^{2}$ in this quotient ring: $$(1 - x^{2}+x^{3})(1 + x + x^{2}) = 1+x+x^5 = 1 +x(x^4+1) \equiv 1 \mod (x^4+1).$$

Now let $\beta(x) = x^4 + 1$ and let $\alpha(x) = (1+x+x^{2})^{2^{k}} - 1$, choosing $k$ as in the previous paragraph. To show that the ideal $I$ generated by $\alpha(x)$ and $\beta(x)$ in $\mathbb{Z}[x]$ contains a non-zero integer, it suffices to show that these two polynomials are relatively prime in $\mathbb{Q}[x]$. Since $\mathbb{Q}[x]$ is a Euclidean domain, we will then have two rational polynomials $a(x)$ and $b(x)$ such that $1 = a(x) \alpha(x) + b(x) \beta(x)$. Now for some positive integer $m$ we have $ma(x), mb(x) \in \Z[x]$, and hence \[ m = (m a(x)) \alpha(x) + (m b(x)) \beta(x) \] in $\Z[x]$.
This last equation shows that a non-zero integer belongs to $I$, as desired.  

It therefore remains to show that  $\alpha(x)$ and $\beta(x)$ are relatively prime in $\mathbb{Q}[x]$. To this end, 
first note  that $\beta(x) = x^{4}+ 1$ is an irreducible polynomial in $\mathbb{Q}[x]$. Hence we need only show that $\beta(x)$ does not divide $\alpha(x)$ in $\mathbb{Q}[x]$.  This is indeed not possible because it can easily be verified that the complex root $\zeta = e^{i \pi/4}$ of $\beta(x)$ is not a root of $\alpha(x)$ since $1 + \zeta + \zeta^2$ does not lie on the unit circle (so it is not a root of unity).\end{proof}

In analyzing rings of positive characteristic, we will make use of the following proposition.

\begin{prop} \label{rel-prime}
Let $q$ and $p$ be distinct primes and let $1 \leq n \leq \infty$. There exists a ring $R$ of characteristic $q$ with $R^{\times} \cong C_{p^{n}}$ if and only if
\begin{enumerate}
\item $n = 1$, $q = 2$, and $p$ is a Mersenne prime, or\label{mp}
\item $n = 3$, $q = 3$, and $p = 2$, or\label{oddball}
\item $n < \infty$, $q = 2^n + 1$ is a Fermat prime, and $p = 2$.\label{fp}
\end{enumerate}
\end{prop}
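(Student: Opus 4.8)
\noindent\emph{Proof strategy.} The plan is to handle the two directions separately, with essentially all the content living in the ``only if'' direction. For the ``if'' direction, finite fields supply every required example: if $p = 2^m - 1$ is a Mersenne prime then $\mathbf{F}_{2^m} = \mathbf{F}_{p+1}$ has characteristic $2$ and unit group $C_{2^m-1} \cong C_p$, giving (\ref{mp}); the field $\mathbf{F}_9$ has characteristic $3$ and unit group $C_8 \cong C_{2^3}$, giving (\ref{oddball}); and if $q = 2^n + 1$ is a Fermat prime then $\mathbf{F}_q$ has characteristic $q$ and unit group $C_{q-1} \cong C_{2^n}$, giving (\ref{fp}). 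For the converse I would first invoke Proposition \ref{char}: since the characteristic is a prime $q$, its list collapses to two cases, namely $q = 2$ (forcing $p$ odd, as $p \neq q$) or $q$ an odd Fermat prime (forcing $p = 2$). In either case let $\ell$ denote the characteristic, so $\mathbf{F}_\ell \subseteq R$, and let $t \in \{p,2\}$ be the prime for which $R^\times$ is a $t$-group.

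The key local step, which I expect to do most of the work, is to analyze the subring $A = \mathbf{F}_\ell[u]$ generated by a single unit $u$ of finite order $t^k$. Because $\ell$ and $t$ are distinct primes, $\ell \nmid t^k$, so $x^{t^k} - 1$ is separable over $\mathbf{F}_\ell$ and $\mathbf{F}_\ell[x]/(x^{t^k}-1)$ is a product of finite fields; as $A$ is a quotient, $A \cong \prod_i \mathbf{F}_{\ell^{d_i}}$. Hence $A^\times \cong \prod_i C_{\ell^{d_i}-1}$ is a finite subgroup of $R^\times \cong C_{t^n}$, and since every finite subgroup of a cyclic or quasi-cyclic $t$-group is cyclic, $A^\times$ is a cyclic $t$-group. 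Thus each $\ell^{d_i}-1$ is a power of $t$; but two nontrivial such factors would share the factor $t$ (being even when $t=2$, or divisible by $p$ when $t=p$) and hence could not have coprime orders, so their product could not be cyclic. I would conclude that $A$ reduces to a single field $\mathbf{F}_{\ell^d}$ with $A^\times \cong C_{\ell^d-1} \cong C_{t^j}$ containing $u$, whence $k \le j$ and
\[ \ell^d = t^j + 1. \]

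It then remains to solve this equation, and here the two characteristics diverge. When $\ell = 2$ and $t = p$, the equation is $2^d = p^j + 1$; splitting on the parity of $j$ (for $j$ even, $p^{j/2}$ is an odd square, so $p^j + 1 \equiv 2 \pmod 8$ forces $d=1$ and hence $p^j = 1$; for $j$ odd, $p+1$ divides $p^j + 1$ with an odd cofactor exceeding $1$) forces $j = 1$, so $p = 2^d - 1$ is Mersenne. When $\ell = q$ is odd and $t = 2$, the equation is $q^d - 1 = 2^j$; splitting on the parity of $d$ shows that $d=1$ yields the Fermat prime $q = 2^j + 1$, while $d \ge 2$ forces $d=2$ and the single sporadic solution $3^2 - 1 = 2^3$. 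This is exactly the appearance of Catalan's identity that produces the exceptional $C_8$ at $q = 3$, and I expect it to be the most delicate point.

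Finally I would package these into the exponent bound. The displayed equation shows the order $t^j$ of $A^\times$, and hence the order $t^k$ of any unit $u$, is bounded by a constant depending only on $\ell$: we get $j = 1$ when $\ell = 2$, and $j \in \{\log_2(q-1),\, 3\}$ when $\ell = q$. Therefore $R^\times$ has bounded exponent, which immediately rules out the quasi-cyclic case $n = \infty$; and taking $u$ to be a generator when $n < \infty$ gives $A^\times = R^\times$ and $j = n$, pinning down $n = 1$ in case (\ref{mp}) and $q = 2^n + 1$ or $(q,n) = (3,3)$ in cases (\ref{fp}) and (\ref{oddball}). The main obstacle is thus the local-to-global reduction of $\mathbf{F}_\ell[u]$ to a single finite field together with the clean resolution of the two Diophantine equations; once these are in hand, the quasi-cyclic case follows for free from the resulting uniform exponent bound.
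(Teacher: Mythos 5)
Your proposal is correct and follows essentially the same route as the paper's own proof: adjoin a unit $u$ of order $t^k$, use separability of $x^{t^k}-1$ over $\mathbf{F}_\ell$ to split $\mathbf{F}_\ell[u]$ into a product of finite fields, use indecomposability (cyclicity) of the unit group to isolate a single factor, and solve $\ell^d = t^j + 1$. The differences are cosmetic: the paper cites the two Diophantine lemmas from \cite{cl-1} rather than re-deriving them, it rules out $n = \infty$ by choosing a unit of order $p^k > q^2$ rather than via your uniform exponent bound (the same idea in different packaging), and, strictly speaking, $\mathbf{F}_\ell[u]$ need not be a single field when $\ell = 2$ (extra $\mathbf{F}_2$ factors can occur) --- but, as in the paper, only the conclusion about its unit group is needed.
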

\begin{proof}
Let $R$ be ring with prime characteristic $q$ such that $R^\times \cong C_{p^n}$ for some $1 \leq n \leq \infty$. If $n < \infty$, let $k = n$; if $n = \infty$, choose $k > 1$ so that $p^k > q^2$. There exists a unit $\zeta_{p^{k}}$ of order $p^{k}$ in $R^\times$. We then have a natural surjective ring homomorphism
\[ \frac{\mathbf{F}_{q}[x]}{(x^{p^{k}} - 1)}  \longrightarrow \mathbf{F}_{q}[\zeta_{p^{k}}] \subseteq R \]
determined by evaluation of polynomials at $\zeta_{p^k}$.
Since $q$ and $p$ are distinct primes, the polynomial $x^{p^{k}} - 1$ and its formal derivative are relatively prime in $\mathbf{F}_{q}[x]$. Therefore $x^{p^{k}} - 1$ decomposes as 
a product of distinct irreducible polynomials in  $\mathbf{F}_{q}[x]$.  As a result,  $\frac{\mathbf{F}_{q}[x]}{(x^{p^{k}} - 1)} $ is a product of finite fields of characteristic $q$.  Since a quotient of a product of these fields is a again a product of such fields (possibly with fewer factors), the subring $ \mathbf{F}_{q}[\zeta_{p^{k}}]$ is also a product of finite fields of characteristic $q$:
\[  \mathbf{F}_{q}[\zeta_{p^{k}}] \cong \mathbf{F}_{q^{m_{1}}}  \times \cdots \times  \mathbf{F}_{q^{m_{t}}},\]
for some integers $m_{i}$.  Taking units, we have 
\[ \mathbf{F}_{q}[\zeta_{p^{k}}]^{\times} \cong  C_{q^{m_{1}}-1} \times \cdots \times C_{q^{m_{t}} - 1}. \]

Note that $ \mathbf{F}_{q}[\zeta_{p^{k}}]^{\times} $ is a finite subgroup of $C_{p^{n}}$ of order at least $p^{k}$ because $\zeta_{p^{k}}$ has order $p^{k}$. This implies that $ \mathbf{F}_{q}[\zeta_{p^{k}}]^{\times}  \cong C_{p^{r}}$ where $r \ge k$.  This is an indecomposable abelian group which is isomorphic to the product of cyclic groups above.  This implies that $C_{p^{r}} \cong C_{q^{m_{i}}  -1}$ for some $i$. Thus $p^{r} = q^{m_{i}} - 1$ and $n \geq r \ge k$.

To finish, we will need two lemmas from \cite{cl-1} which we now state for the convenience of the reader. We have changed the notation to match the notation in the present work.

\begin{lem}[{\cite[Proposition 2]{cl-1}}] Let $q$ be a prime. The quantity $q^{m} - 1$ is a power of $2$ if and only if $q$ is a Fermat prime and $m = 1$, or $q = 3$ and $m = 2$. \label{even}\end{lem} 

\begin{lem}[{\cite[Proposition 3]{cl-1}}] Let $q$ be a prime. Suppose $q^m - 1 = p^r$ for some odd prime $p$ and positive integer $r$. Then, $r = 1$, $q = 2$, and $p = 2^m - 1$ is a Mersenne prime. \label{odd}\end{lem}

\n First, suppose $n < \infty$, in which case $n = r = k$. If $p = 2$, then by Lemma \ref{even} either  $q$ is a Fermat prime and $m_i = 1$, in which case (\ref{fp}) holds, or $q = 3$ and $m_i = 2$, in which case (\ref{oddball}) holds. If $p$ is odd, then $q = 2$, and by Lemma \ref{odd} we have $r = n = 1$ and $p$ is a Mersenne prime, in which case (\ref{mp}) holds. Next, suppose $n = \infty$. If $p = 2$, then by Lemma \ref{even} $m_i \leq 2$ so $q^2 < p^k \leq p^r  = q^{m_i} - 1 \leq q^2 - 1$, a contradiction. If $p$ is odd, then, by Lemma \ref{odd}, $q = 2$ and $r = 1$. This forces $k = 1$, a contradiction.

It remains to show that in each of the three listed cases, there exists a ring $R$ with $R^\times \cong C_{p^n}$. For (\ref{mp}), let $p$ be a Mersenne prime, and take $R = \mathbf{F}_{p+1}$. For (\ref{oddball}), take $R = \mathbf{F}_9$. Finally, for (\ref{fp}), let $q = 2^n + 1$ be a Fermat prime and take $R = \mathbf{F}_q$.
\end{proof}

\begin{prop} \label{char2}
There is a ring $R$ of characteristic $2$ such that $R^{\times} \cong C_{p^{n}}$ if and only if $(p, n) = (2, 1)$, $(p, n) = (2, 2)$, or $p$ is a Mersenne prime and $n = 1$. 
\end{prop}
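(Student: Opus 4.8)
My plan is to prove the two directions separately, and to split the necessity direction according to whether $p$ is odd or $p = 2$.

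For the existence (\emph{if}) direction, I would simply exhibit a witnessing ring in each case. For $(p,n) = (2,1)$, the ring $\mathbf{F}_2[x]/(x^2)$ has unit group $\{1, 1+x\} \cong C_2$. For $(p,n) = (2,2)$, I would take $\mathbf{F}_2[x]/(x^3)$ and check that $1+x$ has order $4$ (using $(1+x)^2 = 1 + x^2$ and $(1+x)^4 = 1 + x^4 = 1$), so that the four-element unit group is $C_4$. For $p$ a Mersenne prime, writing $p + 1 = 2^m$, the field $\mathbf{F}_{p+1} = \mathbf{F}_{2^m}$ has characteristic $2$ and $R^\times \cong C_{2^m-1} = C_p$.

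For the necessity (\emph{only if}) direction, suppose $R$ has characteristic $2$ and $R^\times \cong C_{p^n}$. When $p$ is odd, I would invoke Proposition~\ref{rel-prime} with the distinct primes $q = 2$ and $p$: of its three alternatives, (\ref{oddball}) requires $q = 3$ and (\ref{fp}) requires $p = 2$, so only (\ref{mp}) survives, yielding $n = 1$ and $p$ Mersenne.

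The main obstacle is the case $p = 2$, where Proposition~\ref{rel-prime} does not apply because the characteristic equals $p$. Here the plan is to rule out a unit of order $8$, forcing $n \le 2$. Assuming $n \ge 3$, choose $\zeta \in R^\times$ of order $8$; since the characteristic is $2$, $(\zeta+1)^8 = \zeta^8 + 1 = 0$, so $y := \zeta + 1$ is nilpotent and $S := \mathbf{F}_2[\zeta] \cong \mathbf{F}_2[y]/(y^d)$, where $d \le 8$ is the nilpotency index of $y$. This $S$ is local with residue field $\mathbf{F}_2$, so $S^\times = 1 + (y)$ has order $2^{d-1}$. The crux is an exponent/order mismatch: the Frobenius identity gives $u^8 = 1 + a^8 = 1$ for every unit $u = 1 + a$ (as $a^8 \in (y^8) = 0$), so $S^\times$ has exponent dividing $8$, while the order-$8$ element $\zeta$ forces $d \ge 5$ and hence $|S^\times| \ge 16$. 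A finite abelian group whose order exceeds its exponent is not cyclic, so $S^\times$ is non-cyclic --- contradicting that it is a finite subgroup of the (quasi-)cyclic group $R^\times \cong C_{2^n}$. I expect this exponent-versus-order computation in $(\mathbf{F}_2[y]/(y^d))^\times$, together with the observation that $2$-power-order units are unipotent in characteristic $2$, to be the technical heart of the argument; the odd-$p$ case is essentially immediate from the earlier proposition.
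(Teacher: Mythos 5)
Your proof is correct and follows essentially the same route as the paper's: the odd-$p$ case is dispatched via Proposition~\ref{rel-prime}, the witness rings are identical, and the $p=2$, $n \ge 3$ case is reduced, just as in the paper, to analyzing the truncated polynomial rings $\mathbf{F}_2[y]/(y^d)$ (the paper reaches them as quotients of $\mathbf{F}_2[x]/(x^8-1) \cong \mathbf{F}_2[x]/(x^8)$, you via the substitution $y = \zeta + 1$). The only difference is the finishing move --- the paper exhibits two distinct involutions $1+x^{t-1}$ and $1+x^{t-2}$, while you compare the exponent (at most $8$) with the order (at least $16$) --- and both correctly rule out cyclicity.
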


\begin{proof} By Proposition \ref{rel-prime}, it suffices to assume $p = 2$. As one can easily check, \[ (\mathbf{F}_2[x]/(x^2))^\times \cong C_2 \ \  \text{ and }\ \   (\mathbf{F}_2[x]/(x^3))^\times \cong C_4, \] so it remains to prove that there is no ring $R$ of characteristic 2 with $R^\times \cong C_{2^n}$ for $3 \leq n \leq \infty$.  Let $R$ be such a ring, and consider the subring $\mathbf{F}_2[\zeta_8]$ of $R$. We have a natural surjective ring homomorphism 
\[ \frac{\mathbf{F}_2[x]}{(x^8-1)} \longrightarrow  \mathbf{F}_2[\zeta_8] \]
determined by evaluation at $x = \zeta_8$. The group $(\mathbf{F}_2[\zeta_8])^\times$ is a subgroup of $ C_{2^{n}}$ with at least $8$ elements, and hence $(\mathbf{F}_2[\zeta_8] )^\times \cong C_{2^k}$, where $k \ge 3$. We claim, however, that there is no quotient of the ring ${\mathbf{F}_2[x]}/{(x^8-1)}$ whose unit group is cyclic of order at least 8. First note that since the characteristic of $R$ is 2, 
\[\frac{\mathbf{F}_2[x]}{(x^8-1)} \cong \frac{\mathbf{F}_2[x]}{(x-1)^8} \cong \frac{\mathbf{F}_2[x]}{(x^8)}.\]
Since $\mathbf{F}_2[x]$ is a principal ideal domain, the non-trivial quotients of this ring (up to isomorphism) are given by ${\mathbf{F}_2[x]}/{(x^t)}$ where $1 \le t \le 8$. A polynomial in any of these rings is a unit if and only if its constant term is 1. This implies that ${\mathbf{F}_2[x]}/{(x^t)}$  has exactly $2^{t-1}$ units. Since we need at least 8 units, $t \ge 4$. The possible candidates for our quotient are therefore the rings ${\mathbf{F}_2[x]}/{(x^t)}$ where $4 \le t \le 8$. However, none of these rings have cyclic multiplicative groups. To see why, first note that since $t \geq 4$, we have $2t -2 \geq 2t - 4 \geq t$. Hence:
\[ (1+x^{t-1})^2 = 1 + x^{2t-2} \equiv 1 \mod (x^t) \]
and 
\[ (1+x^{t-2})^2 = 1 + x^{2t-4} \equiv 1 \mod (x^t).\]
This proves that the distinct non-identity elements $1 + x^{t-1}$ and $1 + x^{t-2}$ each have order two, so the group of units cannot be cyclic.\end{proof}

\begin{prop} \label{char4}
There is a ring $R$ of characteristic $4$ such that $R^{\times} \cong C_{p^{n}}$ if and only if $(p, n) = (2, 1)$ or $(p, n) = (2, 2)$.
\end{prop}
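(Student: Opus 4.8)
The plan is to treat the two directions separately, reducing immediately to the prime $p=2$ via Proposition~\ref{char}. For the existence direction I would exhibit the two rings in the theorem's table and check their unit groups by hand: $\Z_4^\times=\{1,3\}\cong C_2$, and for $R=\Z_4[x]/(x^2-2,2x)$ one verifies that $R$ is local of order $8$ with maximal ideal $(2,x)$, hence has exactly four units, and that $1+x$ has order $4$ because $(1+x)^2=1+x^2=3=-1$ (using $2x=0$ and $x^2=2$); thus $R^\times\cong C_4$. Since $\mathrm{char}\,R=4\neq2$, Proposition~\ref{char} forces $p=2$, so the entire content of the converse is to show that no ring of characteristic $4$ has $R^\times\cong C_{2^n}$ for $3\le n\le\infty$.

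Assuming such an $R$ exists, the first step is to extract the rigidity imposed by characteristic $4$. Since $4r=0$ for every $r$, the set $1+2R$ is a subgroup of $R^\times$ with $(1+2r)^2=1+4r+4r^2=1$, so it is an elementary abelian $2$-group; moreover the surjection $r\mapsto 1+2r$ shows $|1+2R|=|2R|$. An elementary abelian $2$-group sitting inside $C_{2^n}$ (all of whose finite subgroups are cyclic) has order at most $2$, and since $1$ and $3$ are distinct elements of $1+2R$, I would conclude $2R=\{0,2\}$. Now fix a unit $\zeta$ of order $8$, which exists because $n\ge 3$. Then $2\zeta\in\{0,2\}$; the value $0$ is impossible for a unit, so $2\zeta=2$, that is, $2\eta=0$ where $\eta:=\zeta-1$.

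The remaining step is a short nilpotence computation inside the commutative subring generated by $\eta$. Using $2\eta=0$ (hence $2\eta^k=0$ for all $k\ge1$), expanding $\zeta^4=(1+\eta)^4$ collapses to $\zeta^4=1+\eta^4$; since $\zeta^4$ is the unique element of order $2$ in $C_{2^n}$, namely $-1$, this gives $\eta^4=-2=2\neq0$, whence $\eta^5=2\eta=0$ and $\eta^3\neq0$. Therefore $1-\eta$ is a unit (with inverse $1+\eta+\eta^2+\eta^3+\eta^4$), and $1+\eta^3$ is an element of order exactly $2$, since $(1+\eta^3)^2=1+2\eta^3+\eta^6=1$ while $\eta^3\neq0$. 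Uniqueness of the involution in $C_{2^n}$ then forces $1+\eta^3=-1$, so $\eta^3=2=\eta^4$; factoring gives $\eta^3(1-\eta)=0$, and invertibility of $1-\eta$ yields $\eta^3=0$, the desired contradiction. This argument uses only an order-$8$ unit and the uniqueness of the involution, so it handles the quasi-cyclic case $n=\infty$ verbatim.

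The main obstacle is the minimal case $n=3$, which is why the result does not reduce in one line to Proposition~\ref{char2}: the natural quotient $R/2R$ has characteristic $2$ but only inherits a cyclic unit group of order $2^{n-1}$, which for $n=3$ is $C_4$ and hence permitted in characteristic $2$. The obstruction must instead be located inside $R$ through the nilpotent element $\eta$ with $\eta^4=2$, and the delicate point is producing the second involution $1+\eta^3$---the exact analogue of the two order-$2$ elements exploited in the characteristic $2$ proof---so as to contradict cyclicity.
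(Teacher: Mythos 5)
Your proof is correct, and the converse direction takes a genuinely different route from the paper's. The paper splits into two cases: for $4 \le n \le \infty$ it passes to the quotient $R \to R/(2)$, showing the induced map on units is surjective with kernel contained in $\langle -1\rangle$, so that $(R/(2))^\times$ is cyclic of order at least $8$ in characteristic $2$, contradicting Proposition~\ref{char2}; for the residual case $n = 3$ (where, exactly as you observe, this reduction only yields $C_4$ and proves nothing) it maps $\Z_4[x]/(x^8-1)$ onto a finite ring $S$ with $S^\times \cong C_8$, passes to a primary factor using indecomposability, and invokes Gilmer's classification of finite rings with cyclic unit group \cite{Gilmer}. Your argument instead works uniformly inside $R$ for all $3 \le n \le \infty$: the observation that $1+2R$ is an elementary abelian $2$-subgroup of $R^\times$ forces $2R = \{0,2\}$, hence $2\eta = 0$ for $\eta = \zeta_8 - 1$, and the two identities $\eta^4 = 2$ and $\eta^3 = 2$ (each extracted from the uniqueness of the involution $-1$) collide with the invertibility of $1-\eta$. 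This buys a self-contained proof with no case split, no recycling of Proposition~\ref{char2}, and no appeal to an external classification, and it handles the quasi-cyclic case verbatim; the paper's approach, by contrast, is shorter modulo its references and establishes along the way a fact of independent interest (units lift along $R \to R/(2)$ in characteristic $4$). One small point to tighten: your parenthetical justification that an elementary abelian $2$-group inside $C_{2^n}$ has order at most $2$ (``all of whose finite subgroups are cyclic'') should be phrased to cover a possibly infinite subgroup $1+2R$ as well --- simplest is to note that $C_{2^n}$, $1 \le n \le \infty$, has a unique element of order $2$, while an elementary abelian $2$-group of order greater than $2$ has at least three.
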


\begin{proof}
One can check that $\Z_4$ and $\Z_4[x]/(2x, x^2 - 2)$ are rings of characteristic 4 with multiplicative groups $C_2$ and $C_4$, respectively.

For the converse, we know $p = 2$ by Proposition \ref{char}. Now suppose $R$ is a ring of characteristic 4 with $R^\times \cong C_{2^n}$ and $4 \leq n \leq \infty$. Consider the quotient map $R \rightarrow R/(2)$.  We claim that $(R/(2))^{\times}$ is either $C_{2^{n}}$ or $C_{2^{n-1}}$.  We first argue that this natural quotient map induces a surjective map on units. To this end, let $x$ be an element of $R$ that maps to a unit in the quotient. Then, there exists an element $y$ of $R$ such that $xy = 1 \mod (2)$ and hence $xy = 1 + 2t$ for some 
$t$ in $R$.  Now, squaring this equation we obtain that $(xy)^2 = 1$ (using the fact that the characteristic of the ring is 4).  Thus $xy$ is a unit of $R$, and so it must be the case that $xy = 1$ or $xy = -1$, because $R^\times$ has a unique subgroup of order 2.  This implies that $x$ has a right inverse in $R$, and one may similarly prove that $x$ has a left inverse in $R$. We therefore obtain that $x$ is a unit of $R$.

We next argue that the kernel of $R^{\times} \rightarrow  R/(2)^{\times}$ is a subgroup of $\langle -1 \rangle$.  Indeed, if $x$ is an element of the kernel, then $x = 1 + 2t$, and squaring gives $x^2 = 1$, so $x = 1$ or $-1$ because $-1$ is the unique element of order 2 in $R^\times$.  If the kernel is trivial, then $R^{\times} \cong  C_{2^{n}}$.   If the kernel is $\langle -1 \rangle$ and $n < \infty$, then 
\[ (R/(2) )^{\times}  \cong R^{\times}/\langle-1\rangle \cong R^{\times}/C_{2} \cong C_{2^{n}}/C_{2} = C_{2^{n-1}}.\]
If the kernel is $\langle -1 \rangle$ and $n = \infty$, then
\[(R/(2) )^{\times}  \cong R^{\times}/\langle -1\rangle \cong R^{\times}/C_{2} \cong C_{2^{\infty}}/C_{2} = \bigcup_{n \ge 2} C_{2^{n}}/C_{2} = \bigcup_{n \ge 2} C_{2^{n-1}} = C_{2^{\infty}}.\]
In both cases we have  shown that the multiplicative group of $R/(2)$  (a ring of characteristic $2$) is isomorphic to $C_{2^{k}}$ for $3 \le k \le \infty$. This contradicts the previous proposition.

The only remaining case is $p = 2$ and $n = 3$. Suppose to the contrary that there is a ring $R$ of characteristic $4$ such that $R^\times \cong C_8$. 
Let $\theta$ generate $R^\times$. Then we have a ring homomorphism
\[ \frac{\mathbb{Z}_4[x]}{(x^8 -1)} \longrightarrow R\]
which sends $x$ to $\theta$. The image of this map is a finite ring $S$ of characteristic $4$ whose unit group is $C_8$. Since $C_8$ is an indecomposable group, it follows that there is a primary finite ring of characteristic $4$ (some factor of $S$ in its primary decomposition) which has $C_8$ as its unit group. According to \cite{Gilmer} there is no such ring.
\end{proof}

For convenience, the following proposition summarizes the situation when the characteristic of $R$ is a Fermat prime.

\begin{prop} \label{charq}
Let $q$ be a Fermat prime. There is a ring $R$ of characteristic $q$ such that $R^{\times} \cong C_{p^{n}}$ if and only if 
$p = 2$ and $q = 2^n + 1 < \infty$, or $p = 2$, $q = 3$ and $n = 3$. 
\end{prop}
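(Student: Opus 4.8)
The plan is to read this proposition off as a direct consequence of Propositions \ref{char} and \ref{rel-prime}, since it merely repackages the prime-characteristic analysis in the special case that the characteristic is a Fermat prime. The first thing I would record is the innocuous but essential remark that a Fermat prime $q$ is odd: every Fermat prime has the form $2^k+1$ with $k \ge 1$ (the list being $3, 5, 17, 257, 65537, \dots$), so in particular $q \neq 2$. This is precisely what lets the reduction go through cleanly.

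For the forward implication, I would suppose $R$ has characteristic $q$ with $R^\times \cong C_{p^n}$. Because $q \neq 2$, Proposition \ref{char} immediately forces $p = 2$, and hence $p$ and $q$ are distinct primes. I would then invoke Proposition \ref{rel-prime} verbatim: the existence of such a ring means that one of its three alternatives holds. Alternative (\ref{mp}) requires $q = 2$ and is ruled out by the preceding remark. Alternative (\ref{fp}) says exactly that $q = 2^n + 1 < \infty$ with $p = 2$, which is the first stated conclusion, while alternative (\ref{oddball}) says $q = 3$, $n = 3$ with $p = 2$, which is the second. Thus the listed possibilities are the only ones.

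For the converse I would simply exhibit the witnessing rings, which are already named in the proof of Proposition \ref{rel-prime}. If $q = 2^n + 1 < \infty$ is a Fermat prime and $p = 2$, then $\mathbf{F}_q^\times \cong C_{q-1} = C_{2^n}$, so $R = \mathbf{F}_q$ realizes $C_{p^n}$. If $q = 3$, $p = 2$, and $n = 3$, then $\mathbf{F}_9^\times \cong C_8 = C_{2^3}$, so $R = \mathbf{F}_9$ works. I would note explicitly that for $q = 3$ both clauses are live, producing $C_2$ (from $\mathbf{F}_3$) and $C_8$ (from $\mathbf{F}_9$), which explains the appearance of two groups in the $c=3$ rows of the main theorem.

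There is no genuine obstacle here; the mathematical content resides entirely in the two earlier propositions, and this statement is essentially a convenient corollary. The only points demanding a little care are bookkeeping: ensuring that ``Fermat prime'' is understood to exclude $q = 2$ so that the Mersenne alternative (\ref{mp}) cannot interfere, and correctly matching alternatives (\ref{oddball}) and (\ref{fp}) of Proposition \ref{rel-prime} to the two clauses of the statement.
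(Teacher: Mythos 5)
Your proposal is correct and matches the paper's approach: the paper's entire proof is that the proposition ``follows immediately from Proposition \ref{rel-prime},'' which is exactly the reduction you carry out. Your additional care in invoking Proposition \ref{char} to force $p = 2$ (so that $p$ and $q$ are distinct, as Proposition \ref{rel-prime} requires) is a worthwhile detail the paper leaves implicit.
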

\begin{proof} This follows immediately from Proposition \ref{rel-prime}.\end{proof}

\begin{prop} \label{char2q}
Let $q$ be an odd prime. There is a ring $R$ of characteristic $2q$ such that $R^{\times} \cong C_{2^{n}}$ if and only if there is a ring $S$ of characteristic $q$ such that $S^{\times} \cong C_{2^{n}}$.\end{prop}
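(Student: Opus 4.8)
The plan is to prove the two directions separately: the reverse direction is a one-line construction, while the forward direction rests on splitting $R$ via a central idempotent coming from the Chinese Remainder Theorem.

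For the reverse direction, suppose $S$ is a ring of characteristic $q$ with $S^{\times} \cong C_{2^{n}}$. I would simply take $R = S \times \mathbf{F}_2$. Since $q$ is odd, the characteristic of $R$ is $\operatorname{lcm}(q,2) = 2q$, and $R^{\times} \cong S^{\times} \times \mathbf{F}_2^{\times} \cong C_{2^{n}} \times C_1 \cong C_{2^{n}}$, as required.

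For the forward direction, suppose $R$ has characteristic $2q$ and $R^{\times} \cong C_{2^{n}}$. Because $\gcd(2,q) = 1$, the subring $\mathbb{Z}_{2q} \hookrightarrow R$ splits as $\mathbb{Z}_{2q} \cong \mathbb{Z}_{2} \times \mathbb{Z}_{q}$. Concretely, $e = q$ is a nonzero central idempotent of $R$ (one checks $q^{2} \equiv q \pmod{2q}$ since $q-1$ is even) satisfying $2e = 0$ and $q(1-e) = 0$, with $1 - e \neq 0$. As $e$ is central, $R$ decomposes as a product of rings $R \cong eR \times (1-e)R$. From $2e = 0$ and $e \neq 0$ I get $\mathrm{char}(eR) = 2$, and from $q(1-e) = 0$ and $1-e \neq 0$ I get $\mathrm{char}\big((1-e)R\big) = q$. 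Taking units then yields $C_{2^{n}} \cong R^{\times} \cong (eR)^{\times} \times \big((1-e)R\big)^{\times}$.

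The key step is to identify which factor carries all the units. Since $C_{2^{n}}$ is indecomposable, one of the two factors $(eR)^{\times}$ and $\big((1-e)R\big)^{\times}$ must be trivial. Here is where the hypothesis that $q$ is odd does the work: the ring $(1-e)R$ has characteristic $q > 2$, so its identity $1-e$ and its negative $-(1-e)$ are distinct, producing an element of order $2$ in $\big((1-e)R\big)^{\times}$. Hence $\big((1-e)R\big)^{\times}$ is nontrivial, forcing $(eR)^{\times}$ to be trivial and $\big((1-e)R\big)^{\times} \cong C_{2^{n}}$. Setting $S = (1-e)R$, a ring of characteristic $q$ with $S^{\times} \cong C_{2^{n}}$, completes this direction. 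I expect the only delicate point to be exactly this case analysis — ensuring that the characteristic-$2$ factor, rather than the characteristic-$q$ factor, is the one with trivial unit group; this is precisely the step that exploits the order-two unit $-1$ present in any ring of odd characteristic, and everything else reduces to the idempotent decomposition and the indecomposability of the cyclic (or quasi-cyclic) $2$-group.
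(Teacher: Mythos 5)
Your proof is correct and follows essentially the same route as the paper: the paper obtains the splitting via the Chinese Remainder Theorem applied to the ideals $(2)$ and $(q)$, giving $R \cong R/(2) \times R/(q)$, which is exactly your Peirce decomposition $R \cong eR \times (1-e)R$ for the central idempotent $e = q\cdot 1_R$ (one has $eR \cong R/(2)$ and $(1-e)R \cong R/(q)$). From there both arguments coincide: indecomposability of $C_{2^n}$ forces one factor's unit group to be trivial, the odd-characteristic factor has the nontrivial unit $-1$, and the converse is the same construction $S \times \mathbf{F}_2$.
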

\begin{proof}
Suppose we have a ring $R$ with characteristic $2q$ such that $ R^{\times} \cong C_{2^{n}}$ for some $1 \le n \le \infty$. Consider the (two sided) ideals $(2)$ and $(q)$ in $R$. Note that since $q$ is odd, we have $(2) + (q) = (1)$. Moreover, $(2) \cap (q) = (0)$.  By the Chinese remainder theorem,
\[ R \cong R/(2) \times R/(q).\]
Hence, $R^\times  \cong ( R/(2) )^\times \times ( R/(q) )^\times$.  The group $R^{\times}$ is isomorphic to  $C_{2^{n}}$, an indecomposable abelian group. Therefore either $( R/(2) )^\times $ or $( R/(q) )^\times$ has to be the trivial group. But since $R/(q)$ is a ring of odd characteristic it must have non-trivial units. Thus $R/(q)$ is a ring of characteristic $q$ whose unit group is $C_{2^n}$.

Conversely, suppose we have a ring $R$ of characteristic $q$ such that $R^{\times} \cong C_{2^{n}}$. The ring $R \times \mathbf{F}_2$ is a ring of characteristic $2q$  (since $q$ is odd) whose unit group is also $C_{2^n}$.
\end{proof}

The foregoing analysis may now be used to prove the main theorem stated in \S \ref{sec:introduction}, following the proof outlined there. The next theorem provides a summary that ignores the connection between the unit group and the characteristic of $R$. For $n < \infty$, this theorem can also be derived using the general classification of rings whose unit group is a finite cyclic group in \cite{PS}. 

\begin{thm} \label{summary}
There is a ring $R$ such that $R^{\times} \cong C_{p^{n}}$ for some prime $p$ and $1 \le n \leq \infty$ if and only if $n < \infty$ and
\[p^{n} = \begin{cases} 
 8  &  \\
q - 1 &  \text{where }  q \text{ is a Fermat prime, or  } \\
p &  \text{where }  p \text{ is a Mersenne prime. } 
\end{cases}
\]
In particular, there is no ring $R$ whose unit group is a quasi cyclic $p$-group.
\end{thm}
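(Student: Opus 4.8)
The plan is to assemble Theorem \ref{summary} directly from the characteristic-by-characteristic analysis already carried out in this section, since the theorem merely repackages those results while forgetting the characteristic. First I would observe that Proposition \ref{char} restricts any ring $R$ with $R^\times \cong C_{p^n}$ to have characteristic $0$, $2$, $4$, $q$, or $2q$ for a Fermat prime $q$, so it suffices to collect the realizable $C_{p^n}$ from each of these cases. Gathering the conclusions of Propositions \ref{char0} ($C_2$, $C_4$ in characteristic $0$), \ref{char2} ($C_2$, $C_4$, and $C_p$ for $p$ Mersenne in characteristic $2$), \ref{char4} ($C_2$, $C_4$ in characteristic $4$), and \ref{charq}/\ref{char2q} (via the Fermat-prime cases $C_{q-1}$, together with the exceptional $C_8$ arising from $q=3$) yields exactly the groups in the stated list. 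The union of these is precisely the set of $C_{p^n}$ with $p^n = 8$, with $p^n = q-1$ for $q$ a Fermat prime, or with $p^n = p$ for $p$ a Mersenne prime.

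The main content is checking that this union of realizable groups coincides, as a set of cyclic groups, with the right-hand side of the theorem. I would argue the three cases of the displayed formula in turn. The case $p^n = 8$ corresponds to $C_8$, realized by $\mathbf{F}_9$ (characteristic $3$); this is the exceptional case \eqref{oddball} of Proposition \ref{rel-prime}. The case $p^n = q-1$ for a Fermat prime $q = 2^m+1$ gives $C_{2^m}$, which includes $C_2$ (from $q=3$), $C_4$ (from $q=5$), and in general is realized by $\mathbf{F}_q$; I should note that $C_2$ and $C_4$ also arise in characteristics $0$, $2$, and $4$, so no group is lost by restricting attention to these formulas. The case $p^n = p$ for $p$ a Mersenne prime gives $C_p$, realized by $\mathbf{F}_{p+1}$ in characteristic $2$. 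Since every realizable cyclic group from the characteristic analysis falls into one of these three descriptions, and conversely each described group is realizable, the equivalence holds.

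The one point requiring genuine care, rather than mere bookkeeping, is the assertion that no $C_{p^n}$ with $n = \infty$ is realizable, i.e. that no ring has a quasi-cyclic group of units. This does not follow from a single proposition but must be read off from the $n = \infty$ subcases scattered through the section: Proposition \ref{char0} eliminates $C_{2^\infty}$ in characteristic $0$, Proposition \ref{char2} eliminates it in characteristic $2$, Proposition \ref{char4} reduces the characteristic $4$ quasi-cyclic case to the characteristic $2$ case, and Proposition \ref{rel-prime} (invoked for characteristics $q$ and $2q$) explicitly derives a contradiction in every $n=\infty$ subcase. I would therefore state the ``in particular'' clause as an immediate consequence of these combined eliminations, since Proposition \ref{char} guarantees there are no other characteristics to consider.

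I expect the only real obstacle to be an exhaustiveness argument rather than any hard inequality: one must verify that the list extracted from the five characteristic cases contains no cyclic group outside the three families $\{C_8\}$, $\{C_{q-1} : q \text{ Fermat}\}$, and $\{C_p : p \text{ Mersenne}\}$, and that these families are mutually consistent (for instance, that the groups $C_2$ and $C_4$ appearing redundantly across characteristics are already captured by the Fermat family $q=3,5$). Because all the substantive number-theoretic input—namely Lemmas \ref{even} and \ref{odd} on when $q^m-1$ is a prime power—has already been deployed inside Proposition \ref{rel-prime}, the proof of Theorem \ref{summary} itself should reduce to citing the earlier propositions and tabulating their outputs.
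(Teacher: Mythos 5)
Your proposal is correct and follows essentially the same route as the paper: both deduce the theorem by citing Proposition \ref{char} to limit the characteristic, then collecting the realizable groups from Propositions \ref{char0}, \ref{char2}, \ref{char4}, \ref{charq}, and \ref{char2q} (with the finite fields $\mathbf{F}_9$, $\mathbf{F}_q$, $\mathbf{F}_{p+1}$ giving the forward direction), and handling $n=\infty$ by noting that every one of those propositions excludes the quasi-cyclic case. The paper merely organizes the converse by splitting on whether $p$ is odd or $p=2$ rather than tabulating by characteristic, a cosmetic difference.
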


\begin{proof}
It is clear that if $p^{n}$ is equal to one of the listed numbers, then there is a ring (in fact, a finite field) whose group of units is $C_{p^{n}}$. For the converse, observe that $C_{p^\infty}$ is never the multiplicative group of a ring (review Propositions \ref{char}, \ref{char0}, \ref{char2}, \ref{char4}, \ref{charq}, and \ref{char2q}), so we may assume $n < \infty$.

Now suppose that $p$ is odd. By Proposition \ref{char}, the characteristic of $R$ has to be 2. Proposition \ref{char2} implies that $n=1$ and $p$ is a Mersenne prime.
Next consider $p=2$. Proposition \ref{char} implies that the characteristic of $R$ has to be $0, 2, 4, q$ or $2q$ where $q$ is a Fermat prime. Since the cyclic groups $C_{2}, C_{2^{2}}$, and $C_{2^{3}}$  are each realizable as the multiplicative group of a finite field, we can assume that $n \ge 4$. 
(Note that $2^{3}$ is the only number in this list which is not of the form $q - 1$, where $q$ is a Fermat prime.) Then we can invoke Propositions \ref{char0}, \ref{char2}, and \ref{char4} to conclude that is no ring $R$ such that $R^{\times} \cong C_{p^{n}}$ when the characteristic of $R$ is $0$, $2$, or $4$, respectively.
In view of Proposition \ref{char2q}, it is enough to consider the case when the characteristic of $R$ is an odd prime $q$. In this case, $q$ must be a Fermat prime with $2^n = q - 1$ by Propositions \ref{charq}.
 \end{proof}

\begin{remark}
Theorem \ref{summary} also answers the following natural question. {\em Is the class of realizable abelian groups closed under specialization? } In other words, if $G$ is an abelian group that is the group of units of a commutative ring, is is true that every subgroup of $G$ is also the unit group of some commutative ring? The answer is no, as shown by the following example. Consider the group $C_{256}$.  This is a realizable group. In fact, since $257$ is a prime, we have  $\mathbf{F}_{257}^{\times} \cong C_{256}$. But  $C_{128}$, the unique subgroup of $C_{256}$ with index $2$, is not realizable by Theorem \ref{summary} because neither 128 nor 129 is a prime. In fact, it follows from  Proposition \ref{charq} that  $C_{256}$  is the smallest realizable indecomposable abelian group that is not closed under specialization.  The smallest decomposable example is $C_{10} \cong \mathbf{F}_{11}^\times$ because the subgroup $C_5$ is not realizable.
\end{remark}

\section{Torsion-free groups} \label{sec:torsion-free}
In this section we will show that every torsion-free abelian group $G$ is the group of units of some commutative ring.  A key ingredient in the  proof is the existence of a linear order on any torsion-free abelian group. A linear order on an abelian group $G$ is a binary relation $(\le)$ on $G$ that is reflexive, anti-symmetric, transitive, total, and satisfies translational invariance: if $a \le b$ in $G$, then for any $c$ in $G$, $ac \le bc$. An abelian group with a linear order is a partially ordered set in which any two elements are related by $\le$; such partially ordered sets are also called chains. A quintessential example of a linearly ordered abelian group is the group $(\mathbb{Q}, +)$, the additive group of rational numbers under the usual $\le$ relation.   In any linearly ordered abelian group, we write $a < b$, when $a \le b$ and $a \ne b$.

\begin{prop} Let $G$ be a linearly ordered abelian group.  If $a < b$ and $c \le d$, then $ac < bd$.  
\end{prop}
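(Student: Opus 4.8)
The plan is to apply translational invariance twice—once to each hypothesis—and then splice the two resulting inequalities together by transitivity. The only real content is that the axioms as stated guarantee only that the \emph{non-strict} relation is preserved under translation, so the main work is in promoting the relevant inequalities from $\le$ to $<$ using cancellation in the group.

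First I would record an auxiliary fact: translation preserves strict inequality, i.e.\ if $a < b$ then $ac < bc$ for every $c \in G$. Translational invariance already gives $ac \le bc$; if equality held, then multiplying by $c^{-1}$ (available since $G$ is a group) would yield $a = b$, contradicting $a < b$. Hence $ac \ne bc$, and so $ac < bc$. Applying this to the hypothesis $a < b$ with translating element $c$ gives the strict inequality $ac < bc$.

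Next, from the second hypothesis $c \le d$, translational invariance applied with translating element $b$ (commutativity makes the side irrelevant) gives $bc \le bd$. It remains to combine $ac < bc$ and $bc \le bd$ into $ac < bd$. This uses the mixed-transitivity fact that $x < y$ together with $y \le z$ implies $x < z$: ordinary transitivity gives $x \le z$, and were $x = z$ we would have $x \le y \le x$, forcing $x = y$ by anti-symmetry and contradicting $x < y$. Taking $x = ac$, $y = bc$, $z = bd$ then yields $ac < bd$, as desired.

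The step I expect to be the only point requiring care is the promotion from $\le$ to $<$ under translation: the ordered-group axioms listed in the preceding paragraph supply only monotonicity of the weak relation, so one cannot directly substitute a strict inequality. Invoking the group inverse to cancel the translating element is exactly what closes this gap, and the rest is routine manipulation of a linear order.
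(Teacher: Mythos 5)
Your proof is correct and follows essentially the same route as the paper's: promote $a < b$ to $ac < bc$ via translational invariance plus cancellation, get $bc \le bd$ from $c \le d$, and combine the two using transitivity and anti-symmetry to rule out $ac = bd$. The only difference is organizational (you isolate the two steps as explicit lemmas), not mathematical.
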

\begin{proof} Since $a < b$, we also have $a \le b$. Translational invariance implies that  $ac \le bc$.  But $ac \ne bc$, for otherwise $a = b$ by cancellation. Therefore $ac < bc$.  Since $c \le d$, translational invariance gives $cb \le bd$. Thus $ac < bc \le bd$.  Applying the transitive property, we have $ac \le bd$.  Again, $ac \ne bd$, because otherwise $ac \le bc \le ac$. By anti-symmetry, $ac = bc$, which is again a contradiction. So $ac < bd$ and this completes the proof.
\end{proof}

\begin{thm}[\cite{Levi}]\label{levi}
An abelian group $G$ admits a linear order if and only if it is torsion-free.
\end{thm}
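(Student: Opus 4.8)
The plan is to prove both implications, treating the forward direction (orderable $\Rightarrow$ torsion-free) as a short consequence of the preceding proposition and reserving the real work for the converse.

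First I would dispatch the easy direction. Suppose $G$ carries a linear order $\le$ with translational invariance, and let $e$ denote its identity. If $a \ne e$ is a torsion element, then by totality either $a > e$ or $a^{-1} > e$; replacing $a$ by $a^{-1}$ if necessary, I may assume $e < a$. Applying the preceding proposition to the pair of inequalities $e < a$ and $e \le a$ yields $e < a^2$, and an easy induction (apply the proposition to $e < a^k$ and $e \le a$) gives $e < a^k$ for every $k \ge 1$. In particular $e < a^n$ for the order $n$ of $a$, contradicting $a^n = e$. Hence $G$ is torsion-free.

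For the converse, the plan is to embed $G$ into a $\Q$-vector space and transport a lexicographic order. Write $G$ additively, as it is abelian. Since $G$ is torsion-free, the canonical map $G \to V := G \otimes_{\Z} \Q$ is injective, so it suffices to linearly order the $\Q$-vector space $V$ compatibly with addition and then restrict. By the well-ordering theorem I choose a basis $B$ of $V$ together with a well-ordering of $B$. Every nonzero $v \in V$ then has a unique finite expansion $v = \sum q_b\, b$ with $q_b \in \Q$; letting $b(v)$ be the least index with $q_{b(v)} \ne 0$, I declare $v$ to be positive when $q_{b(v)} > 0$. Setting $u < w$ iff $w - u$ is positive gives a relation that I would verify is a translation-invariant linear order: totality holds because every nonzero vector is either positive or the negative of a positive vector, and translation invariance is immediate since $(w + x) - (u + x) = w - u$. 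The induced order on $G \subseteq V$ is then the desired linear order.

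The step I expect to be the main obstacle is the converse, and within it the essential use of the hypothesis: torsion-freeness is exactly what makes $G \to G \otimes_{\Z} \Q$ injective (any element killed by this map is torsion), which is what allows the order on $V$ to restrict to an honest linear order on $G$ rather than merely a preorder. Everything else — the existence of a basis and well-ordering via the axiom of choice, and the check that the leading-coefficient rule defines a translation-invariant total order — is routine once the embedding is in place. As an alternative to the vector-space construction, one could instead invoke Zorn's lemma to produce a maximal subsemigroup $P \ni e$ with $P \cap P^{-1} = \{e\}$ and show, using torsion-freeness, that maximality forces $P \cup P^{-1} = G$, so that $P$ is the positive cone of a linear order; in that route the obstacle shifts to proving that maximal cones are total.
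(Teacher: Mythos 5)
The paper does not prove this statement at all: it is Levi's theorem, quoted with the citation \cite{Levi}, and the authors explicitly remark that they only need the ``if'' direction (for Theorem \ref{torsion-free}). So any correct proof is new content relative to the paper, and yours is correct in both directions. For the forward implication you cleverly reuse the paper's preceding proposition (from $a<b$ and $c\le d$ conclude $ac<bd$) --- which the paper proves only as a tool for analyzing units in $\mathbf{F}_2[G]$ --- to get $e<a$ after possibly replacing $a$ by $a^{-1}$, and then $e<a^k$ for all $k\ge 1$ by induction, contradicting $a^n=e$; this is sound. For the converse, the embedding $G\hookrightarrow G\otimes_{\Z}\Q$ (injective precisely because $G$ is torsion-free) followed by the leading-coefficient order relative to a well-ordered basis is the standard proof and works; the one step you gloss over is transitivity, i.e., that the set of positive vectors is closed under addition, which does check out by comparing the least basis indices of the two summands (in the tie case the two positive leading coefficients add to a positive rational, so no cancellation can occur at the leading term). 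Your alternative sketch via a Zorn's-lemma-maximal positive cone is also a known route, though you would still need to carry out the maximality-forces-totality argument. The net comparison: the paper buys brevity by citing Levi; your argument buys self-containment at the cost of invoking the axiom of choice, which is in any case unavoidable for arbitrary torsion-free abelian groups.
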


We only need the `if' part of Levi's theorem. 

\begin{thm} \label{torsion-free}
If $G$ is a torsion-free abelian group, then $(\mathbf{F}_{2}[G])^{\times} \cong G$. In particular, indecomposable torsion-free abelian groups are unit groups of commutative rings.
\end{thm}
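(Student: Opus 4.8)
The plan is to realize $G$ inside $\mathbf{F}_2[G]^\times$ as the ``trivial units'' and then show there are no others, using the linear order supplied by Theorem \ref{levi}. Every $g \in G$ gives a unit $1\cdot g$ of the group ring, with inverse $1 \cdot g^{-1}$, and the map $g \mapsto 1 \cdot g$ is visibly an injective group homomorphism $G \hookrightarrow \mathbf{F}_2[G]^\times$. So the entire content of the theorem is to prove that this map is surjective, i.e.\ that every unit of $\mathbf{F}_2[G]$ is a monomial $1 \cdot g$.

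First I would fix a linear order $\le$ on $G$ (Theorem \ref{levi}). For a nonzero element $\alpha = \sum_g a_g g$, its support---the finite set of $g$ with $a_g \neq 0$---is nonempty, so it has a largest element $M(\alpha)$ and a smallest element $m(\alpha)$. The key step is a support lemma: for nonzero $\alpha, \beta$, the product $\alpha\beta$ is again nonzero, with $M(\alpha\beta) = M(\alpha)M(\beta)$ and $m(\alpha\beta) = m(\alpha)m(\beta)$. To see this, I would expand $\alpha\beta = \sum_{g,h} a_g b_h\,(gh)$ and observe, via the preceding Proposition, that among all products $gh$ with $g$ in the support of $\alpha$ and $h$ in that of $\beta$, the value $M(\alpha)M(\beta)$ is attained only by the pair $(M(\alpha), M(\beta))$: if $g < M(\alpha)$ and $h \le M(\beta)$ then $gh < M(\alpha)M(\beta)$, and symmetrically if $h < M(\beta)$. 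Hence the coefficient of $M(\alpha)M(\beta)$ in $\alpha\beta$ equals $a_{M(\alpha)}b_{M(\beta)} = 1\cdot 1 = 1 \neq 0$, where the field $\mathbf{F}_2$ is precisely what rules out cancellation, since every nonzero coefficient is $1$. The same argument applied to minima identifies $m(\alpha\beta)$.

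Now suppose $\alpha$ is a unit, say $\alpha\beta = 1$. The identity $1$ is the monomial $1\cdot e$, where $e$ is the identity of $G$, so its support is $\{e\}$ and $M(1) = m(1) = e$. The support lemma then forces $M(\alpha)M(\beta) = e = m(\alpha)m(\beta)$. If $\alpha$ were not a monomial we would have $m(\alpha) < M(\alpha)$, and applying the preceding Proposition with $c = m(\beta) \le d = M(\beta)$ would yield $m(\alpha)m(\beta) < M(\alpha)M(\beta)$, contradicting the equality just obtained. Therefore $m(\alpha) = M(\alpha)$, so $\alpha$ has singleton support and $\alpha = 1\cdot g$ for some $g \in G$. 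Thus every unit is trivial, the embedding $G \hookrightarrow \mathbf{F}_2[G]^\times$ is an isomorphism, and the ``in particular'' clause follows immediately.

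The one place demanding genuine care---and the crux of the argument---is the support lemma, specifically the claim that the top (and bottom) term of a product cannot cancel. This is exactly where the linear order is indispensable: it guarantees that the extremal product is achieved by a \emph{unique} pair of support elements, so no cancellation can occur. Over a general coefficient ring one would additionally need the ring to be a domain, but over $\mathbf{F}_2$ the extremal coefficient is automatically $1$; everything else in the proof is routine bookkeeping with the order relation.
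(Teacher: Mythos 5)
Your proof is correct and takes essentially the same approach as the paper's: both fix a linear order on $G$ via Theorem \ref{levi} and observe that the smallest and largest cross-terms $m(\alpha)m(\beta)$ and $M(\alpha)M(\beta)$ of a product are each attained by a unique pair of support elements, hence cannot cancel over $\mathbf{F}_2$, which forces any unit to have singleton support. Your packaging of this as a standalone ``support lemma'' (from which the conclusion follows directly) versus the paper's direct contradiction from $uv=1$ is only an organizational difference, not a different argument.
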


\begin{proof}
Recall that the elements in $G$ form an $\mathbf{F}_{2}$-basis for the group algebra $\mathbf{F}_{2}[G]$. Since $G$ is torsion-free, $G$ admits a linear order which we denote by $\le$.  Let $u$ be a unit in $\mathbf{F}_{2}[G]$. We can write 
\[ u = g_{1} + g_{2} + \cdots + g_{n},\]
for some $n \geq 1$ and  $g_{1} < g_{2} < \cdots < g_{n}$. 
We claim that $u$ has only one term; i.e., $n=1$. Suppose to the contrary $n > 1$. Then $g_{1} < g_{n}$. Let $v$ be the inverse of $u$. We write $v$ as
\[ v = h_{1} + h_{2} + \cdots + h_{m},\]
for some $m \geq 1$ and $h_{1} < h_{2} < \cdots < h_{m}$.
Since $uv = 1$, we have 
\[ \sum_{i=1}^{n} \sum_{j=1}^{m} g_{i} h_{j} = 1.\]
This can be written as 
\[ g_{1}h_{1} + g_{n}h_{m} + \text{ other terms } = 1.\]
Since $g_{1} < g_{n}$, and $h_{1} \le h_{m}$, the above proposition implies that  $g_{1}h_{1} < g_{n}h_{m}$,  $g_{1}h_{1}$ is the unique smallest term, and $g_{n}h_{m}$ is the unique largest term in the above summation. In particular, this shows that the elements of $G$ are not linearly independent, a contradiction.
\end{proof}

\begin{remark}
The second statement of Theorem \ref{torsion-free} provides an interesting contrast with a result in \cite{cl-1} that an indecomposable torsion-free abelian group of finite rank cannot be the multiplicative group of any  field. 
\end{remark}

\bibliographystyle{alpha}


\end{document}